\documentclass[12pt]{amsart} 
   \textwidth=6.5in \oddsidemargin=0in \evensidemargin=0in
   \topmargin=0in \textheight=8.5in
\usepackage{amssymb,amsmath,eufrak,euscript}

\usepackage{graphicx}

\def\R{\mathbb{R}}
\def\Cdb{\mathbb{C}}
\def\C{\mathbb{C}}
\def\D{\mathbb{D}}
\def\Ddb{\mathbb{D}}

\def\H{{\mathcal{H}}}
\begin{document}
\newtheorem{Thm}{Theorem}[section]
\newtheorem{Main}{Main Theorem}
\renewcommand{\theMain}{}
\newtheorem{Cor}[Thm]{Corollary}
\newtheorem{Lem}[Thm]{Lemma}
\newtheorem{Prop}[Thm]{Proposition}
\newtheorem{Def}[Thm]{Definition}
\newtheorem{remark}[Thm]{Remark}

\newtheorem{Obs}[Thm]{Observation}
\def\theequation{\thesection.\arabic{equation}}

\newcommand{\norm}[1]{\Vert#1\Vert}

\title{On functional calculus properties of Ritt operators}

\author{Florence Lancien}

\address{Universit\'e de Franche-Comt\'e, Laboratoire de Math\'ematiques UMR 6623,
16 route de Gray, 25030 Besan\c con Cedex, FRANCE.}
\email{florence.lancien@univ-fcomte.fr}

\author{Christian Le Merdy}

\address{Universit\'e de Franche-Comt\'e, Laboratoire de Math\'ematiques UMR 6623,
16 route de Gray, 25030 Besan\c con Cedex, FRANCE.}
\email{christian.lemerdy@univ-fcomte.fr}

\date{\today}

\thanks{The second named author is supported by the research program ANR 2011 BS01 008 01}

\begin{abstract} We compare various functional calculus properties
of Ritt operators. We show the existence of a Ritt operator $T\colon X\to X$
on some Banach space $X$ with the following property:
$T$ has a bounded $\H^\infty$ functional calculus 
with respect to the unit disc $\D$ (that is, $T$ is polynomially bounded)
but $T$ does not have any bounded $\H^\infty$ functional calculus 
with respect to a Stolz domain of $\D$ with vertex at $1$. Also we show
that for an $R$-Ritt operator, the unconditional Ritt condition
of Kalton-Portal is equivalent to the existence of a bounded $\H^\infty$ functional calculus 
with respect to such a Stolz domain.
\end{abstract}

\maketitle

\bigskip\noindent
{\it 2000 Mathematics Subject Classification : 
47A60.}

\bigskip
\section{Introduction}

Ritt operators on Banach spaces have a specific $\H^\infty$ functional calculus
which was formally introduced in \cite{LM}. This functional calculus is related to
various classical notions playing a role in the harmonic analysis of single operators, such as
square functions, maximal inequalities, multipliers and dilation properties, see
in particular the above mentioned paper and \cite{A, ALM, LMX}. The purpose of the present
paper is to compare the $\H^\infty$ functional calculus of Ritt operators to two
closely related notions, namely polynomial boundedness and the unconditional Ritt
condition from \cite{KP}.

Let $\Ddb=\{z\in\Cdb\, :\, \vert z\vert < 1\}$ be the open unit disc of the complex field,
let $X$ be a (complex) Banach space and recall that a bounded operator $T\colon X\to X$
is called polynomially bounded if there exists a constant $K\geq 0$ such that
$$
\Vert P(T)\Vert\,\leq K\sup\bigl\{\vert P(z)\vert\, :\, z\in\Ddb\bigr\}
$$
for any polynomial $P$. We say that $T$ is a Ritt operator provided that
the spectrum of $T$ is included in $\overline{\Ddb}$ and the set
\begin{equation}\label{Ritt}
\bigl\{(\lambda-1)R(\lambda,T)\ :\, |\lambda|>1\bigr\}
\end{equation}
is bounded. (Here $R(\lambda,T)=(\lambda -T)^{-1}$ denotes the resolvent operator.)
For any $\gamma\in \bigl(0,\frac{\pi}{2}\bigr)$, let $B_\gamma$
be the open Stolz domain defined as the interior of the convex hull of $1$ and
the disc $D(0,\sin \gamma)$, see Figure 1 below.

\begin{figure}[ht]
\vspace*{2ex}
\begin{center}
\includegraphics[scale=0.4]{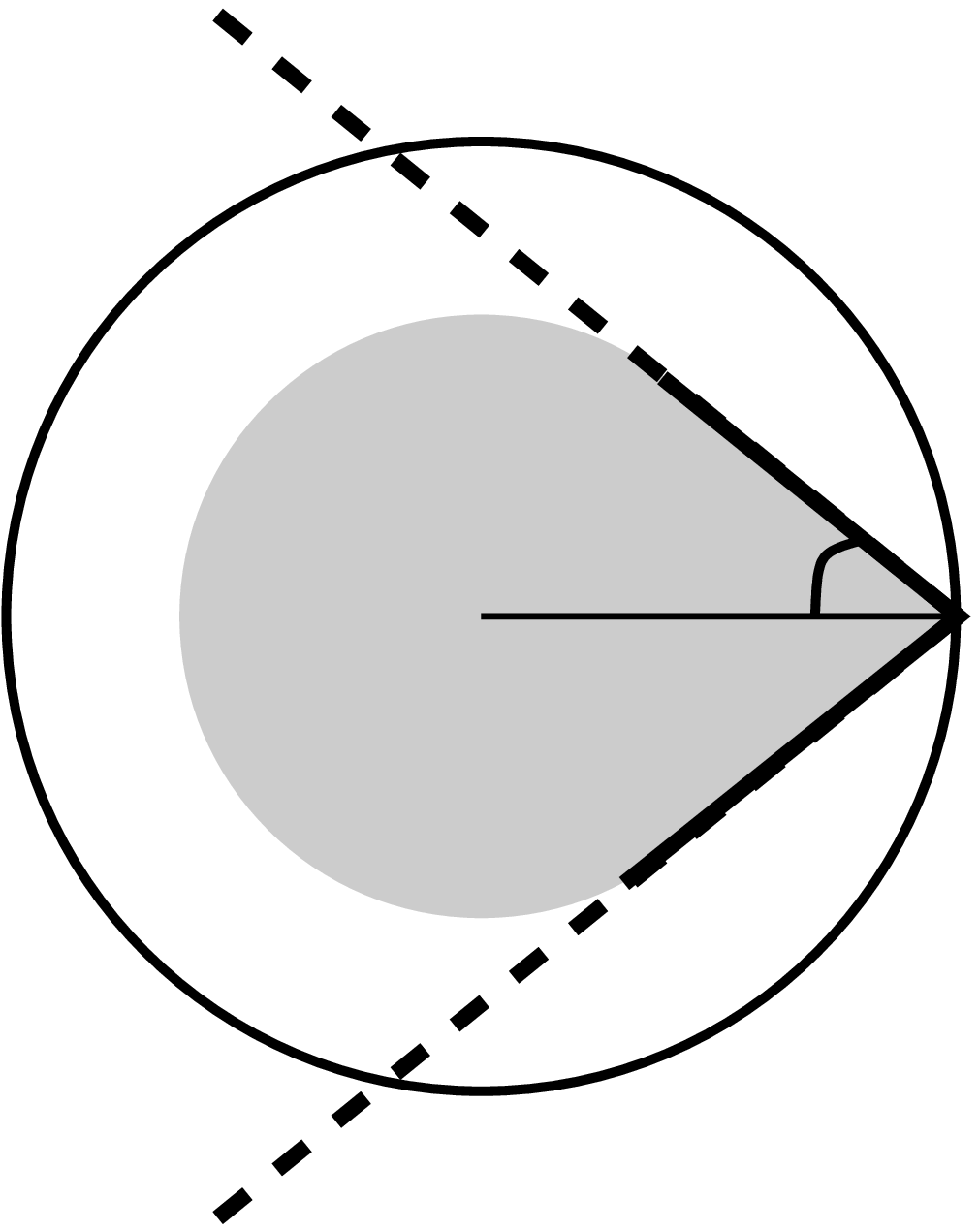}
\begin{picture}(0,0)
\put(-2,65){{\footnotesize $1$}}
\put(-68,65){{\footnotesize $0$}}
\put(-32,81){{\footnotesize $\gamma$}}
\put(-55,85){{\small $B_\gamma$}}
\end{picture}
\end{center}
\caption{\label{f1}}
\end{figure}

It is well-known that the spectrum of any Ritt operator $T$ is included in the closure
$\overline{B_\gamma}$ of one of those Stolz domains. Following \cite{LM}, we say that
$T$ has a bounded $\H^\infty(B_\gamma)$ functional calculus
if there is a constant $K\geq 0$ such that
\begin{equation}\label{bfc}
\Vert P(T)\Vert\,\leq K\sup\bigl\{\vert P(z)\vert\, :\, z\in B_\gamma\bigr\}
\end{equation}
for any polynomial $P$. Since $B_\gamma\subset\Ddb$, it is plain that
this property implies polynomial
boundedness. It was shown in \cite{LM} that the converse holds true on Hilbert spaces.
Our main result asserts that this does not remain true on all Banach spaces.
We will exhibit a Banach space $X$ and
a Ritt operator $T\colon X\to X$ which
is polynomially bounded but has no bounded
$\H^\infty(B_\gamma)$ functional calculus. This will be achieved
in Section 3 (see Theorem \ref{main}). This example
is obtained by first developing and then exploiting a construction of
Kalton concerning sectorial operators \cite{K}. Section 2 is
devoted to preliminary results and to the main features of Kalton's example.

Following \cite{KP} we say that $T$ satisfies the unconditional Ritt condition
if there exists a constant $K\geq 0$ such that
\begin{equation}\label{urc}
\Bigl\Vert\sum_{k\geq 1} a_k\bigl(T^k -T^{k-1})\Bigr\Vert\,\leq K\sup\bigl\{\vert a_k\vert\, :\, k\geq 1\bigr\}
\end{equation}
for any finite sequence $(a_k)_{k\geq 1}$ of complex numbers.
This property is stronger than the Ritt condition \cite[Prop. 4.3]{KP} and it is easy to check that
if $T$ admits a bounded $\H^\infty(B_\gamma)$ functional calculus for some $\gamma<\frac{\pi}{2}$, then
$T$ satisfies the unconditional Ritt condition (see Lemma \ref{fcur} below).
We do not know if the converse holds true. However we will show in Section 4 that
if $T$ is $R$-Ritt and satisfies the unconditional Ritt condition, then it
admits a bounded
$\H^\infty(B_\gamma)$ functional calculus for some $\gamma<\frac{\pi}{2}$.
As a consequence we generalize \cite[Thm. 4.7]{KP} by showing that on a large class of Banach spaces,
the unconditional Ritt condition is equivalent to certain
square function estimates for $R$-Ritt operators.

\section{Sectorial operators and Kalton's example}

Let $X$ be a Banach space and let $A:D(A)\to X$ be a closed operator 
with dense domain $D(A)\subset X$. We let $\sigma(A)$ denote the spectrum of 
$A$ and whenever $\lambda$ belongs to the resolvent set $\Cdb\setminus\sigma(A)$, we let 
$R(\lambda,A)=(\lambda -A)^{-1}$ denote the corresponding resolvent operator.

For any $\omega\in(0,\pi)$, we let $\Sigma_\omega=\{z\in\Cdb^*\, :\, \vert{\rm Arg}(z)\vert<\omega\}$.
We also set $\Sigma_0=(0,\infty)\,$ for convenience. We recall that by definition,
$A$ is sectorial if there exists an angle $\omega$ such that $\sigma(A)\subset\overline{\Sigma_\omega}$ 
and for any $\nu\in(\omega,\pi)$ the set
\begin{equation}\label{sectorial} 
\bigl\{\lambda R(\lambda,A)\ :\, \lambda\in\C\setminus \overline{\Sigma_\nu}\,\bigr\}
\end{equation} 
is bounded. The smallest $\omega\in[0,\pi)$ with this property is called the sectorialy angle 
of $A$.

We will need a few facts about $\H^\infty$ functional calculus for sectorial operators
that we now recall. For backgound and complements, we refer the reader to
\cite{CDMcIY, H, McI}.

Let $A$ be a sectorial operator with sectorialy angle $\omega\geq 0$. 
One can naturally define a bounded operator $F(A)$ for any rational function $F$ 
with nonpositive degree and poles outside $\sigma(A)$. 
Let $\phi\geq\omega$. The operator $A$ is said to admit a 
bounded $\H^\infty(\Sigma_\phi)$ 
functional calculus if
there exists a constant $K$ such that for all functions $F$ as above,
\begin{equation}\label{bfcs} 
\|F(A)\|\leq K \sup\bigl\{|F(z)|\ :\, z\in \Sigma_\phi\bigr\}.
\end{equation} 
In that case, if $\mu$ denotes the infimum of all angles $\phi$ for which such an estimate holds, then 
$A$ is said to admit a bounded $\H^\infty$ functional calculus of type $\mu$.

Note that the above definition makes sense even for $\phi=\omega$, which 
is important for our purpose (see 
Proposition  \ref{Delyon} below).
If $\phi>\omega$ and $A$ has dense range, 
it follows from \cite{CDMcIY, McI} that when the estimate (\ref{bfcs}) 
holds true on rational functions, then the homomorphism $F\mapsto F(A)$ 
naturally extends to a bounded operator on $\H^\infty(\Sigma_\phi)$, the Banach algebra of all bounded 
analytic functions on $\Sigma_\phi.$ In particular for $s\in\R$, the image of the function 
$z\mapsto z^{is}$ under this homomorphism coincides with the classical 
imaginary power $A^{is}$ of $A$. These imaginary powers hence satisfy the estimate 
$$
\| A^{is}\|\leq K e^{\phi |s|},\qquad s\in\R,
$$
when (\ref{bfcs}) holds true.

On a Hilbert space, a well known result of McIntosh \cite{McI} asserts that 
if $A$ is a sectorial operator with sectoriality angle $\omega$ which admits bounded 
imaginary powers or a bounded $\H^\infty(\Sigma_\phi)$ functionnal calculus for some $\phi>\omega$, 
then it has a bounded $\H^\infty(\Sigma_\phi)$ functionnal calculus for any $\phi>\omega$. That is, its 
$\H^\infty$ functional calculus type coincides with its sectoriality angle.

However on general Banach spaces, this property can fail. 
Indeed in \cite{K} Kalton constructs, for any $\theta\in(0,\pi)$, a Banach space 
$X_\theta$ and a sectorial operator $A$ on $X_\theta$ with sectoriality angle $0$, which 
admits a bounded $\H^\infty$ functional calculus of type $\theta$.

The construction is as follows. 
On the classical space $L^2(\R)$, consider 
the norms $\|.\|_\theta$ defined by 
\begin{equation}\label{CH4}
\|f\|_\theta^2=\int_\R e^{-2\theta|\xi|}|\widehat{f}(\xi)|^2\,d\xi.
\end{equation}
Obviously $\|.\|_0$ is the usual $L^2$-norm and $\|\ \cdotp\|_\theta$ is a smaller norm.
For any $\theta\in(0,\pi)$, we let $H_\theta$ denote the completion of $L^2(\R)$ for the norm $\|\ \cdotp\|_\theta$;
this is a Hilbert space.

Let $A$ be the multiplication operator on $L^2(\R)$ defined by 
$$
Af(x)=e^{-x}f(x).
$$
In the sequel we will keep the same notation to denote various
extensions of $A$ on some spaces containing $L^2(\R)$ as a dense subspace.
Note that for any $\phi>0$ and any $F\in \H^\infty(\Sigma_\phi)$, $F(A)$
is the multiplication operator associated to $x\mapsto F(e^{-x})$.

According to \cite{K}, $A$ extends to a sectorial operator on $H_\theta$
with a bounded $\H^\infty$ functional calculus of type $\theta$. 
This (non-trivial) fact follows from the following observations. 
First, for any $f\in L^2(\R)$, we have $A^{is}f(x)= e^{-isx}f(x)$, hence
\begin{equation}\label{CH1}
\widehat{A^{is}f}(\xi)= \widehat{f}(\xi+s)
\end{equation}
for any $s,\xi$ in $\R$. Second, using the definition of 
$\norm{\ \cdotp}_\theta$, this implies that
\begin{equation}\label{BIP}
\|A^{is}\|_{H_\theta\to H_\theta}\,= e^{\theta|s|},\qquad s\in\R.
\end{equation}
This equality implies, by the  above mentioned result of McIntosh, 
that the operator $A$ on $H_\theta$ admits a bounded $\H^\infty(\Sigma_\phi)$ 
functional calculus for all $\phi>\theta$.

The next step is to construct a new completion $X_\theta$ of $L^2(\R)$ on which 
$A$ has similar
$\H^\infty$ functional calculus properties but a `better' sectoriality angle. 
We will point out some important elements of this construction.
Consider a new norm on $L^2(\R)$ by letting 
\begin{equation}\label{CH5}
\|f\|_{X_\theta}=\sup_{a\in\R}\|f\chi_{(-\infty,a)}\|_\theta.
\end{equation}
Then let $X_\theta$ be the completion of $L^2(\R)$ for this norm. 
Clearly for any $f\in L^2(\R)$, we have
$$
\|f\|_\theta\leq \|f\|_{X_\theta}\leq \|f\|_0.
$$
Thus $L^2(\R)\subset X_\theta\subset H_\theta$ with contractive embeddings. Note that
contrary to $H_\theta$,
$X_\theta$ is not a Hilbert space. Again $A$ extends to a sectorial operator
on $X_\theta$. A key fact is that on this new space, the sectoriality angle of $A$ 
is equal to $0$. This is a consequence of the following computation.
For any $f\in L^2(\R)$ and any $\lambda\in \C\setminus \R_+$,
\begin{equation}\label{CH2}
(\lambda-e^{-x})^{-1} f(x)= \int_\R {\lambda e^{-t}\over (\lambda-e^{-t})^2}\, f(x)\,\chi_{(-\infty,t)}(x)\, dt
\end{equation}
for any $x\in\R$.
If we let $\psi=\arg \lambda$, this implies
$$
\|\lambda R(\lambda,A)f\|_{\theta}\leq \|f\|_{X_\theta}\int_0^\infty|s-e^{i\psi}|^{-2}\, ds.
$$
Applying this with $f\chi_{(-\infty,a)}$ instead of $f$, we deduce a uniform
estimate $\norm{\lambda R(\lambda,A)}_{X_\theta\to X_\theta}\leq K_{\psi}$, which yields 
the desired sectoriality property.

If $m\in L^\infty (\R)$ is such that the multiplication operator $f\mapsto mf$ is bounded on 
$H_\theta$ with norm less than $C_m$, then the same holds true on $X_\theta$, since
$$
\|mf\|_{X_\theta}=\sup_{a\in\R}\|mf\chi_{(-\infty,a)}\|_\theta\leq C_m \|f\|_{X_\theta}.
$$
Since $F(A)$ is such a multiplication operator for any $F\in \H^\infty(\Sigma_\phi)$,
we derive the following.

\begin{Lem}\label{HX}
If $A$ admits a bounded $\H^\infty(\Sigma_\phi)$ functional calculus
on $H_\theta$, then it admits a bounded $\H^\infty(\Sigma_\phi)$ functional calculus
on $X_\theta$ as well.
\end{Lem}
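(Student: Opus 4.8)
The plan is to reduce the claim to the elementary estimate about multiplication operators that was established immediately before the statement. The key observation is that the bounded $\H^\infty(\Sigma_\phi)$ functional calculus for a sectorial operator is defined (in \eqref{bfcs}) by a uniform bound on $\|F(A)\|$ over rational functions $F$ with nonpositive degree and poles outside $\sigma(A)$, and for the multiplication operator $Af(x)=e^{-x}f(x)$ each such $F(A)$ is itself the multiplication operator associated with the bounded function $x\mapsto F(e^{-x})$, with $\|F(A)\|_{H_\theta\to H_\theta}$ controlled by $\sup\{|F(z)| : z\in\Sigma_\phi\}$ by hypothesis. So I would first record that if $A$ has a bounded $\H^\infty(\Sigma_\phi)$ functional calculus on $H_\theta$ with constant $K$, then for every admissible rational $F$ the multiplication symbol $m_F(x)=F(e^{-x})$ has $\|m_F\|_{H_\theta\to H_\theta}\le K\sup_{\Sigma_\phi}|F|$.

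Next I would invoke the displayed inequality preceding the statement: for any $m\in L^\infty(\R)$ whose multiplication operator is bounded on $H_\theta$ with norm $\le C_m$, one has $\|mf\|_{X_\theta}=\sup_{a\in\R}\|mf\chi_{(-\infty,a)}\|_\theta\le C_m\|f\|_{X_\theta}$, because multiplication by $m$ commutes with multiplication by $\chi_{(-\infty,a)}$ and $\|f\chi_{(-\infty,a)}\|_{X_\theta}\le\|f\|_{X_\theta}$. Applying this with $m=m_F$ and $C_m=K\sup_{\Sigma_\phi}|F|$ gives $\|F(A)\|_{X_\theta\to X_\theta}\le K\sup\{|F(z)| : z\in\Sigma_\phi\}$ for every admissible $F$, with the \emph{same} constant $K$. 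Since this is exactly the defining estimate \eqref{bfcs} for the operator $A$ on $X_\theta$, we conclude that $A$ admits a bounded $\H^\infty(\Sigma_\phi)$ functional calculus on $X_\theta$.

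One should check that the class of rational functions $F$ entering \eqref{bfcs} is the same on both spaces: this holds because $A$ has the same spectrum-free region $\C\setminus\overline{\Sigma_\omega}$ available (more precisely, $\sigma(A)\subset\overline{\Sigma_\phi}$ on $X_\theta$ as well, since $X_\theta\subset H_\theta$ continuously and $A$ on $X_\theta$ is a sectorial extension, so any pole outside $\sigma(A)$ on $H_\theta$ lies outside $\sigma(A)$ on $X_\theta$ too — in fact the resolvent estimate \eqref{CH2} already shows $\sigma(A)\subset\R_+$ on $X_\theta$). I would also remark that the natural homomorphism $F\mapsto F(A)$ on $X_\theta$ is consistent with that on $H_\theta$ because both are obtained by restricting the multiplication-operator construction on $L^2(\R)$, which is dense in each space.

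The only mildly delicate point, and the one I would make sure to address explicitly, is that the multiplication operator $f\mapsto m_F f$ is genuinely well defined and bounded on $X_\theta$ as the continuous extension of its action on the dense subspace $L^2(\R)$ — but this is immediate from the computation above, which bounds $\|m_F f\|_{X_\theta}$ in terms of $\|f\|_{X_\theta}$ for $f\in L^2(\R)$. Hence there is no real obstacle; the lemma is a direct transfer of the functional calculus bound from $H_\theta$ to $X_\theta$ via the inequality $\|f\chi_{(-\infty,a)}\|_{X_\theta}\le\|f\|_{X_\theta}$ and the order of the two suprema defining $\|\cdot\|_{X_\theta}$.
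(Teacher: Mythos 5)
Your proposal is correct and follows exactly the paper's argument: the authors also observe that each $F(A)$ is a multiplication operator, and transfer its $H_\theta$-bound to $X_\theta$ via $\|mf\|_{X_\theta}=\sup_a\|mf\chi_{(-\infty,a)}\|_\theta\leq C_m\|f\|_{X_\theta}$. The extra checks you add (same admissible class of rational functions, consistency of the two calculi on the dense subspace $L^2(\R)$) are left implicit in the paper but are harmless and correct.
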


Finally, and this is the most difficult part of \cite{K}, it turns out that 
the imaginary powers of $A$ have the same norms on $X_\theta$ and on $H_\theta$, namely 
\begin{equation}\label{CH9}
\|A^{is}\|_{X_\theta\to X_\theta}= \|A^{is}\|_{H_\theta\to H_\theta}= e^{\theta|s|}
\end{equation}
for any $s\in\R$.
Combining with Lemma \ref{HX}, this implies that on $X_\theta$, the operator $A$ admits 
a bounded $\H^\infty(\Sigma_\phi)$ functional calculus for any $\phi>\theta$
but cannot   have
a bounded $\H^\infty(\Sigma_\phi)$ functional calculus for some $\phi<\theta$.

We finally consider the case $\phi=\theta$, which is not treated in \cite{K} but is important 
for our purpose. This requires a new ingredient, namely the next statement 
which is implicit in \cite{LM}.

\begin{Prop}\label{Delyon} 
Let $A$ be a sectorial operator with dense range on some Hilbert space $H$, assume
that $A$ admits bounded imaginary powers and 
that for some $\theta\in(0,\pi)$, they satisfy 
an exact estimate $\|A^{is}\|\leq e^{\theta|s|}$ for any $s\in\R$.
Then $A$ has a bounded $\H^\infty(\Sigma_\theta)$ functional calculus.
\end{Prop}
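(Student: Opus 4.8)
The plan is to recover the sharp functional calculus bound from the exact growth rate of imaginary powers by a Mellin (or equivalently Fourier) transform argument on the Hilbert space $H$. Recall that on a Hilbert space, the crucial feature is that bounded imaginary powers with \emph{some} exponential control already give a bounded $\H^\infty(\Sigma_\phi)$ calculus for every $\phi>\theta$ by McIntosh's theorem; the point here is to push $\phi$ down to $\theta$ itself by exploiting that the estimate $\|A^{is}\|\leq e^{\theta|s|}$ has the optimal constant $1$ in front and the optimal rate $\theta$. First I would reduce to showing the bound \eqref{bfcs} with $\phi=\theta$ on a suitable dense class of functions $F$, say rational functions with nonpositive degree and poles off $\overline{\Sigma_\theta}$, and then invoke the extension/approximation machinery of \cite{CDMcIY, McI} (valid since $A$ has dense range) to pass to all of $\H^\infty(\Sigma_\theta)$.

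The heart of the argument is the representation of $F(A)$ via the imaginary powers. For $F$ in the relevant class one writes, for $x$ in the interior of the domain and with the substitution making $A^{is}$ appear,
\[
F(A)=\frac{1}{2\pi}\int_\R \widetilde{F}(s)\,A^{-is}\,ds,
\]
where $\widetilde{F}$ is the Fourier transform of $t\mapsto F(e^{t})$ along the line where this function lives; more precisely one uses that $F\in\H^\infty(\Sigma_\phi)$ with decay makes $t\mapsto F(e^{t})$ extend holomorphically to the strip $|\mathrm{Im}\,t|<\phi$, and one shifts the contour of the inverse Mellin transform. Taking norms and using $\|A^{-is}\|\leq e^{\theta|s|}$ gives
\[
\|F(A)\|\leq\frac{1}{2\pi}\int_\R |\widetilde{F}(s)|\,e^{\theta|s|}\,ds.
\]
Now the factor $e^{\theta|s|}$ is exactly compensated by shifting the contour in the definition of $\widetilde F$ to the boundary of the strip $|\mathrm{Im}\,t|=\theta$: on that boundary the function is controlled by $\sup_{\Sigma_\theta}|F|$, and the weighted integral collapses (after a further regularization, e.g. multiplying $F$ by $(z/(1+z)^2)^\varepsilon$ and letting $\varepsilon\to0$, to ensure absolute convergence) to a uniform multiple of $\sup\{|F(z)| : z\in\Sigma_\theta\}$. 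This yields \eqref{bfcs} with $\phi=\theta$ and a fixed constant $K$.

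The main obstacle is convergence at the endpoint: with $\phi=\theta$ the weight $e^{\theta|s|}$ is critically balanced against the decay of $\widetilde F$ coming from holomorphy in the \emph{open} strip, so the naive integral is only conditionally (or not) convergent, and one cannot simply put the contour on $|\mathrm{Im}\,t|=\theta$ without care. I would handle this exactly as in the sectorial-operator literature and in \cite{LM}: insert a regularizing factor $\psi_\varepsilon(z)=\bigl(\tfrac{z}{(1+z)^2}\bigr)^{\varepsilon}$, prove the estimate for $\psi_\varepsilon F$ where everything converges absolutely, observe that $\psi_\varepsilon(A)F(A)\to F(A)$ strongly (using density of the range and of the domain of $A$), and that $\sup_{\Sigma_\theta}|\psi_\varepsilon F|\to\sup_{\Sigma_\theta}|F|$; a routine limiting argument then gives the clean bound. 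The only genuinely Hilbert-space-specific input is the use of $A^{-is}$ being uniformly bounded with the stated rate, which is precisely the hypothesis; everything else is the standard Mellin-transform functional calculus, so the proof is essentially an endpoint refinement of McIntosh's argument rather than anything new in structure.
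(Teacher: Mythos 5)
Your Mellin-transform strategy does not close at the endpoint, and the regularization you propose does not address the actual obstruction. Writing $g(t)=F(e^{t})$ and $F(A)=\frac{1}{2\pi}\int_\R \widehat{g}(s)A^{-is}\,ds$, the bound $\|F(A)\|\leq \frac{1}{2\pi}\int_\R|\widehat{g}(s)|e^{\theta|s|}\,ds$ is only useful if the right-hand side is controlled by $\sup_{\Sigma_\theta}|F|$. Shifting the contour to $\mathrm{Im}\,t=\pm\theta$ gives $\widehat{g}(s)=e^{\mp\theta s}\,\widehat{g(\cdot\pm i\theta)}(s)$, so after the exponentials cancel you are left with $\int_\R|\widehat{g(\cdot+ i\theta)}(s)|\,ds$, i.e.\ the Wiener ($A(\R)$) norm of the boundary value of $g$. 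This is \emph{not} dominated by the uniform norm of $g$ on the strip: the inclusion $L^\infty\not\subset A(\R)$ is exactly the reason the classical transference/Mellin argument produces a constant of order $(\phi-\theta)^{-1}$ for $\phi>\theta$ and nothing at $\phi=\theta$. The factor $\psi_\varepsilon(z)=(z/(1+z)^2)^{\varepsilon}$ only repairs integrability coming from the behaviour of $F$ at $0$ and $\infty$ (i.e.\ the lack of decay of $g$ at $t=\pm\infty$); it does nothing to convert a sup-norm bound on the boundary lines into an $L^1$ bound on the Fourier side. Relatedly, your remark that the only Hilbert-space input is the hypothesis itself is a warning sign: your argument as written would prove the endpoint result on an arbitrary Banach space, which is precisely what the standard machinery cannot do.

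The paper's proof is genuinely different and genuinely Hilbertian. One lets $iU$ be the generator of the group $(A^{is})$; the \emph{exact} estimate $\|A^{is}\|\leq e^{\theta|s|}$ (constant $1$, not merely some $K$) says that $iU-\theta$ and $-iU-\theta$ generate contraction semigroups, hence by Lumer--Phillips both are dissipative and the numerical range of $U$ lies in the closed band $\{|\mathrm{Im}\,z|\leq\theta\}$. The Crouzeix--Delyon theorem \cite{CD} then gives $\|G(U)\|\leq K\sup_\Omega|G|$ for functions bounded on that band, and applying it to $G(w)=F(e^{w})$, so that $G(U)=F(A)$, yields the $\H^\infty(\Sigma_\theta)$ bound. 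The two places where your sketch must be replaced are therefore: (a) the passage from the exact exponential bound to a geometric statement about the numerical range of the generator, and (b) the use of a Hilbert-space functional-calculus theorem for operators with numerical range in a band, in place of any Fourier-inversion formula.
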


\begin{proof} 
Let $iU$ be the generator of the $c_0$-semigroup $( A^{is})_{s\geq 0}$. 
Our assumption ensures that it both satisfies
$$
\|e^{s(iU-\theta)}\|\leq 1\qquad 
\textrm{ and }\qquad
\|e^{s(-iU-\theta)}\|\leq 1
$$
for any $s\geq 0$. This means that $iU - \theta$ and $-iU-\theta$ both
generate contractive semigroups on $H$. Thus for all $h\in D(U)$, one has
$$
\textrm{Re} \bigl<(\theta +iU)h,h\bigr>\geq 0
\qquad\textrm{ and }\qquad
\textrm{Re} \bigl<(\theta-iU) h , h\bigr>\geq 0.
$$
Hence the numerical range of $U$ lies in the closed band 
$\Omega=\{z\in\C : |\textrm{Im} z|\leq \theta\}.$ 
By \cite[Thm. 1]{CD}, this implies the existence of a constant $K>0$
such that 
\begin{equation}\label{CH3}
\|G(U)\|\leq K \sup \bigl\{|G(w)| : w\in\Omega\bigr\}
\end{equation}
for any rational function $G$ bounded on $\Omega$. The argument 
in \cite{CD} can be extended to more general functions. It is observed in
\cite{LM} that in particular, it applies to 
all functions $G$ of the form $G(w)= F(e^w)$, where 
$F$ is a rational function with negative degree and poles off 
$\overline{\Sigma_\theta}$
and in this case, $G(U)=F(A)$. 
In this situation, $\sup \{|G(w)| : w\in\Omega\}$
coincides with $\sup \{|F(z)| : z\in\Sigma_\theta\}$. 
Hence we deduce from (\ref{CH3}) that $A$ admits a bounded $\H^\infty(\Sigma_{\theta})$ 
functional calculus.
\end{proof}

According to (\ref{BIP}), the above proposition applies to Kalton's 
operator $A$ on $H_\theta$. Hence the latter 
admits a bounded $\H^\infty(\Sigma_\theta)$ functional calculus. 
Applying Lemma \ref{HX}, we deduce 
that the operator $A$ constructed above on $X_\theta$
has a bounded $\H^\infty(\Sigma_\phi)$ functional calculus for all $\phi\geq\theta$ 
(not only for $\phi>\theta$).

\section{Main result}

Our main purpose is to prove Theorem \ref{main} below.
We first need to modify Kalton's example discussed in the previous section.
Roughly speaking we need a similar example with the additional property that the 
the operator should be bounded. We will get a more precise result.

We consider the restriction $B$ of $A$ on $L^2(\R_+)$.
More explicitly, $B\colon L^2(\R_+)\to L^2(\R_+)$ is the bounded operator
defined by 
$$
Bf(x)=e^{-x}f(x),\qquad f\in L^2(\R_+).
$$
Then we let $H_\theta^+$ be the completion of $L^2(\R_+)$ 
for the norm $\|\ \cdotp \|_{\theta}$ defined by (\ref{CH4}), we let $X_\theta^+$ be the completion of $L^2(\R_+)$ 
for the norm $\|\ \cdotp\|_{X_\theta}$ defined by (\ref{CH5}) and we consider extensions of $B$ to those spaces,
as was done in Section 2. Of course $X_\theta^+$ is a closed subspace of $X_\theta$ and the operator
$B$ on $X_\theta^+$ is the restriction of the operator $A$ on $X_\theta$. Thus for any $\phi\in (0,\pi)$
and any appropriate $F\in \H^\infty(\Sigma_\phi)$, we have $F(B)=F(A)_{\vert X_\theta^+\to X_\theta^+}$, and hence
\begin{equation}\label{CH6}
\norm{F(B)}_{X_\theta^+\to X_\theta^+}\leq\norm{F(A)}_{X_\theta\to X_\theta}.
\end{equation}
Similar comments apply for $H_\theta$ and $H_\theta^+$.

\begin{Prop}\label{Ka} On the Banach space $X_\theta^+$, the operator $B$ is sectorial, 
its sectoriality angle is equal to $0$, 
its spectrum $\sigma(B)$ lies in $[0,1]$, 
it admits a bounded $\H^\infty(\Sigma_\phi)$ 
functional calculus for all $\phi\geq\theta$, 
and
\begin{equation}\label{CH7}
\norm{B^{is}}_{X_\theta^+\to X_\theta^+} =e^{\theta\vert s\vert},\qquad s\in\R.
\end{equation}
\end{Prop}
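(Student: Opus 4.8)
The plan is to obtain every assertion except the lower bound in~(\ref{CH7}) by restriction from the facts about $A$ on $X_\theta$ proved in Section~2, using that $X_\theta^+$ is a closed subspace of $X_\theta$ which is invariant under $A$ and its functional calculus. Indeed, for $f\in L^2(\R_+)$ the functions $R(\lambda,A)f$ (for $\lambda\notin\overline{\Sigma_\nu}$), $F(A)f$ and $A^{is}f$ are again supported in $\R_+$, hence lie in $L^2(\R_+)\subset X_\theta^+$; since $R(\lambda,A)$, $F(A)$ and $A^{is}$ are bounded on $X_\theta$ they leave $X_\theta^+$ invariant, and $R(\lambda,B)$, $F(B)$, $B^{is}$ are precisely their restrictions (compare~(\ref{CH6})). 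Restricting the resolvent estimates expressing that $A$ has sectoriality angle $0$ on $X_\theta$ shows that $B$ is sectorial on $X_\theta^+$ with sectoriality angle $0$; restricting the $\H^\infty$ estimates obtained at the end of Section~2 (including the delicate case $\phi=\theta$, which there rests on Proposition~\ref{Delyon} and Lemma~\ref{HX}) shows that $B$ has a bounded $\H^\infty(\Sigma_\phi)$ functional calculus on $X_\theta^+$ for every $\phi\geq\theta$; and restricting~(\ref{CH9}) gives $\norm{B^{is}}_{X_\theta^+\to X_\theta^+}\leq\norm{A^{is}}_{X_\theta\to X_\theta}=e^{\theta\vert s\vert}$.

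I would then check that $B$ is actually bounded on $X_\theta^+$ --- in fact a contraction --- which is the Ritt analogue of the computation~(\ref{CH2}). For $g\in L^2(\R_+)$ one has the pointwise identity $e^{-x}g(x)=\int_0^\infty e^{-t}\,g(x)\chi_{(-\infty,t)}(x)\,dt$, since $e^{-x}=\int_x^\infty e^{-t}\,dt$; reading this as a Bochner integral in $X_\theta^+$ and using $\norm{g\chi_{(-\infty,t)}}_{X_\theta}\leq\norm{g}_{X_\theta}$ (immediate from~(\ref{CH5})) one gets $\norm{Bg}_{X_\theta}\leq\norm{g}_{X_\theta}\int_0^\infty e^{-t}\,dt=\norm{g}_{X_\theta}$. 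Hence $\norm{B}_{X_\theta^+\to X_\theta^+}\leq1$, so $\sigma(B)\subset\overline{\Ddb}$; together with $\sigma(B)\subset\overline{\Sigma_\nu}$ for every $\nu>0$, i.e. $\sigma(B)\subset[0,\infty)$, this gives $\sigma(B)\subset[0,1]$.

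The remaining point, which I expect to be the crux, is the lower bound $\norm{B^{is}}_{X_\theta^+\to X_\theta^+}\geq e^{\theta\vert s\vert}$; here restriction only yields ``$\leq$'', and the idea is to transport nearly extremal vectors for $A^{is}$ on $X_\theta$ into the subspace $X_\theta^+$ by translating them far to the right. For $f\in L^2(\R)$ and $N>0$ put $f_N=f(\,\cdot\,-N)$. Since $\vert\widehat{f_N}\vert=\vert\widehat{f}\vert$, translation is isometric for $\norm{\,\cdot\,}_\theta$; from the identity $f_N\chi_{(-\infty,a)}=\bigl(f\chi_{(-\infty,a-N)}\bigr)_N$ one sees it is isometric for $\norm{\,\cdot\,}_{X_\theta}$ as well, and $A^{is}f_N=e^{-isN}(A^{is}f)_N$ gives $\norm{A^{is}f_N}_{X_\theta}=\norm{(A^{is}f)_N}_{X_\theta}$. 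If $f$ is compactly supported then so is $A^{is}f$ (a modulation of $f$), so for $N$ large enough $f_N$ and $A^{is}f_N$ are supported in $\R_+$, and the displayed relations yield
$$
\frac{\norm{B^{is}f_N}_{X_\theta}}{\norm{f_N}_{X_\theta}}=\frac{\norm{A^{is}f}_{X_\theta}}{\norm{f}_{X_\theta}}.
$$
Finally, compactly supported functions are dense in $L^2(\R)$, hence in $X_\theta$ (as $\norm{\,\cdot\,}_{X_\theta}\leq\norm{\,\cdot\,}_0$), and $f\mapsto\norm{A^{is}f}_{X_\theta}/\norm{f}_{X_\theta}$ is continuous on $X_\theta\setminus\{0\}$ since $A^{is}$ is bounded on $X_\theta$; hence the supremum of this ratio over compactly supported $f$ equals $\norm{A^{is}}_{X_\theta\to X_\theta}=e^{\theta\vert s\vert}$ by~(\ref{CH9}). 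Feeding nearly extremal such $f$ into the displayed equality gives $\norm{B^{is}}_{X_\theta^+\to X_\theta^+}\geq e^{\theta\vert s\vert}-\varepsilon$ for every $\varepsilon>0$, which with the upper bound proves~(\ref{CH7}).
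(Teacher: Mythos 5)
Your proposal is correct and follows essentially the same route as the paper: the restriction argument based on (\ref{CH6}) for sectoriality, the sectoriality angle and the $\H^\infty(\Sigma_\phi)$ calculus (including $\phi=\theta$ via Proposition \ref{Delyon} and Lemma \ref{HX}), and the translation-to-the-right trick on compactly supported functions, using the translation invariance of $\norm{\,\cdotp}_\theta$ and $\norm{\,\cdotp}_{X_\theta}$ together with $\tau_N(A^{is}f)=e^{isN}A^{is}(\tau_N f)$, for the lower bound in (\ref{CH7}). The only variation is the spectrum step: you show $\norm{B}_{X_\theta^+\to X_\theta^+}\leq 1$ via $e^{-x}=\int_x^\infty e^{-t}\,dt$ and combine this with the sectoriality angle being $0$, whereas the paper bounds $(\lambda-B)^{-1}$ directly for each $\lambda\notin[0,1]$ through the analogous integral representation of $(\lambda-e^{-x})^{-1}$; both arguments rest on the same key estimate $\norm{g\chi_{(-\infty,t)}}_{X_\theta}\leq\norm{g}_{X_\theta}$.
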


\begin{proof} 
It is clear from (\ref{CH6}) and results established for $A$ in Section 2 that 
on $X_\theta^+$, $B$ is sectorial with a sectoriality angle equal to $0$, 
and it admits a bounded $\H^\infty(\Sigma_\phi)$ 
functional calculus for all $\phi\geq\theta$.

To show the spectral inclusion $\sigma(B)\subset[0,1]$, 
consider $\lambda\in\Cdb\setminus[0,1]$. As in (\ref{CH2}), we have
$$
(\lambda- e^{-x})^{-1} f(x)=\int_{0}^{\infty} \frac{e^{-t}}{(\lambda-e^{-t})^2}\,
f(x) \,\chi_{(-\infty,t)}(x)\, dt
$$
for any $f\in L^2(\R_+)$ and any $x\geq 0$. Note that contrary to (\ref{CH2}),
integration is now taken on $(0,\infty)$.
We can therefore deduce that 
$$
\norm{(\lambda-B)^{-1}f}_{X_\theta}\leq\,\norm{f}_{X_\theta}
\int_{0}^{\infty}\frac{e^{-t}}{\vert \lambda -e^{-t}\vert^2}\, dt
$$
for any $f\in L^2(\R_+)$, which ensures that $\lambda-B$ is invertible on $X_\theta^+$.

It remains to prove (\ref{CH7}). We will 
establish it by appealing to (\ref{CH9}) and by showing that
for any $s\in\R$,
$$
\|B^{is}\|_{X_\theta^+ \to X_\theta^+}=\|A^{is}\|_{X_\theta\to X_\theta}.
$$
Let us start with a simple observation. Let $\tau_a$ denote the translation operator
defined by $\tau_af(x)=f(x-a)$. Then 
for any $f\in L^2(\R)$ and for any $a\in\R$, we have 
$\widehat{\tau_a f}(\xi)=e^{-ia\xi}\widehat{f}(\xi)$ for any $\xi\in\R$. 
Looking at the definition (\ref{CH4}), we deduce that
\begin{equation}\label{CH8}
\|\tau_af\|_\theta=\| f\|_\theta.
\end{equation}
For any $t\in\R$, we have $\chi_{(-\infty,t)}\tau_a f = \tau_a\bigl(\chi_{(-\infty,t-a)}f\bigr)$
hence we immediately deduce that
\begin{equation}\label{CH8bis}
\|\tau_af\|_{X_\theta}=\| f\|_{X_\theta}.
\end{equation}

Now take a function $f$ in $L^2(\R)$ with 
bounded support included in some compact interval $[-M,M]$. Given
any $t\in \R$, we have
\begin{align*}
\|\chi_{(-\infty,t)} A^{is}f\|_\theta & =\|\tau_M(\chi_{(-\infty,t)} A^{is}f)\|_\theta
\\
& =\|\chi_{(-\infty,t+M)}\tau_M (A^{is}f)\|_\theta
\\
& \leq \|\tau_M (A^{is}f)\|_{X_\theta}
\end{align*}
by  (\ref{CH8}). Further, $A^{is}f(x)=e^{-isx}f(x)$ hence $\bigl[\tau_M (A^{is}f)](x) = e^{isM}A^{is}(\tau_M f)(x)$
for any real $x$. Thus 
$$
\|\tau_M (A^{is}f)\|_{X_\theta} = \| A^{is}(\tau_M f))\|_{X_\theta}.
$$
Since $\tau_M f $ has support in $\R_+$, we derive that
$$
\|\tau_M (A^{is}f)\|_{X_\theta}\leq \norm{B^{is}}_{X_\theta^+\to X_\theta^+}\norm{\tau_M f}_{X_\theta}.
$$
According to (\ref{CH8bis}) and the preceding inequalities, we deduce that
$$
\|\chi_{(-\infty,t)} A^{is}f\|_\theta
\leq \norm{B^{is}}_{X_\theta^+\to X_\theta^+}\norm{f}_{X_\theta}.
$$
Taking the supremum over $t\in\R$, one obtains 
$\|A^{is}f\|_{X_\theta} \leq \norm{B^{is}}_{X_\theta^+\to X_\theta^+}\norm{f}_{X_\theta}$.
Hence 
$$
\|A^{is}\|_{X_\theta\to X_\theta}\leq \|B^{is}\|_{X_\theta^+\to X_\theta^+}.
$$ 
The reverse inequality is clear, see (\ref{CH6}).
\end{proof}

We now turn to Ritt operators. Recall the definition of a bounded $\H^\infty(B_\gamma)$ 
functional calculus from Section 1 (see also \cite{LM}).

\begin{Thm}\label{main} 
There exists a Ritt operator $T$ on a Banach space $X$ which is 
polynomially bounded but admits no bounded $\H^\infty(B_\gamma)$ 
functional calculus for any 
$\gamma<{\pi\over 2}.$
\end{Thm}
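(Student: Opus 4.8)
The plan is to transfer the sectorial example $B$ on $X_\theta^+$ from Proposition \ref{Ka} into a Ritt operator via the exponential/Cayley-type correspondence between sectors and Stolz domains. Concretely, I would set $T = I - cB$ for a suitable small constant $c>0$, or more flexibly $T = f(B)$ where $f$ is a fixed Möbius-type map sending the sector $\Sigma_\phi$ onto a Stolz domain $B_\gamma$ with vertex $1$ (with $\phi$ and $\gamma$ related by $\gamma = \frac{\pi}{2}-\phi$ or similar). The key structural point is that, under such a map, bounded $\H^\infty(\Sigma_\phi)$ functional calculus for $B$ translates into bounded $\H^\infty(B_\gamma)$ functional calculus for $T$, and vice versa; and the Ritt resolvent condition \eqref{Ritt} for $T$ near the vertex $1$ corresponds exactly to the sectoriality estimate \eqref{sectorial} for $B$ near $0$. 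Since $\sigma(B)\subset[0,1]$ and $B$ is bounded, $T$ will be a bounded operator with spectrum in $\overline{B_\gamma}$ for the relevant $\gamma$, hence a Ritt operator.

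The argument then splits into two halves. \emph{Polynomial boundedness of $T$:} I must show $\|P(T)\|\leq K\sup_{\D}|P|$. Writing $P(T) = P(f(B)) = G(B)$ with $G = P\circ f$ analytic on $\Sigma_\phi$ for every $\phi<\pi$, I use that $B$ on $X_\theta^+$ admits a bounded $\H^\infty(\Sigma_\phi)$ functional calculus for \emph{all} $\phi\geq\theta$ (Proposition \ref{Ka}); in particular for $\phi$ close to $\pi$, so $\Sigma_\phi$ is almost a cut plane. Since $f$ maps $\Sigma_\phi$ into $\D$ for $\phi$ near $\pi$ (the image of a wide sector under the Stolz-map is contained in $\D$), one gets $\sup_{\Sigma_\phi}|G| = \sup_{\Sigma_\phi}|P\circ f|\leq \sup_{\D}|P|$, whence $\|P(T)\| = \|G(B)\|\leq K_\phi \sup_{\D}|P|$. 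This is where the exact choice of $f$ and the range of admissible $\phi$ must be checked carefully. \emph{Failure of $\H^\infty(B_\gamma)$ calculus:} here I invoke \eqref{CH7}, $\|B^{is}\|_{X_\theta^+\to X_\theta^+} = e^{\theta|s|}$. If $T$ had a bounded $\H^\infty(B_\gamma)$ functional calculus for some $\gamma<\pi/2$, then pulling back along $f^{-1}$ would give $B$ a bounded $\H^\infty(\Sigma_\psi)$ functional calculus for the corresponding $\psi<\theta$ (choosing the correspondence so that $\gamma<\pi/2$ forces $\psi<\theta$); applying this to the functions $z\mapsto z^{is}$ would yield $\|B^{is}\|\leq K e^{\psi|s|}$ with $\psi<\theta$, contradicting the exact growth rate in \eqref{CH7} as $|s|\to\infty$.

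The main obstacle I anticipate is making the conformal dictionary between Stolz domains $B_\gamma$ and sectors $\Sigma_\phi$ precise enough to carry \emph{both} directions simultaneously with matching angle parameters, and in particular verifying that a single fixed map $f$ (not depending on $\gamma$) sends every sufficiently wide sector $\Sigma_\phi$ inside $\D$ while still sending narrow sectors $\Sigma_\psi$ into some $B_\gamma$ with $\gamma<\pi/2$. The geometry near the vertex $1$ (where $B_\gamma$ looks like a sector of half-angle $\frac{\pi}{2}-\gamma$ opening into $\D$) should make this work, but the behaviour of $f$ away from the vertex, and the fact that $B_\gamma$ is only the convex hull of $1$ and a disc rather than a genuine sector, requires care: one may need to compose with an auxiliary map or to argue that polynomials, being entire, only ``see'' the vertex singularity. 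A secondary technical point is checking that the functional calculus of $B$ behaves well under composition with $f$ — i.e.\ that $(P\circ f)(B) = P(f(B))$ — which follows from the multiplicativity of the $\H^\infty$ functional calculus once one notes that $f(B)$ is again the multiplication operator by $x\mapsto f(e^{-x})$ and $P\circ f$ is admissible. I would also double-check that $T$ is genuinely Ritt (the boundedness of $\{(\lambda-1)R(\lambda,T):|\lambda|>1\}$) by transporting the sectoriality estimate, since this is needed for $T$ to qualify for the conclusion.
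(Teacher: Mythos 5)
Your overall architecture coincides with the paper's: take $B$ on $X_\theta^+$ from Proposition \ref{Ka}, push it forward by a M\"obius map to get a Ritt operator, prove polynomial boundedness from the sectorial $\H^\infty$ calculus of $B$, and derive the negative half from the exact norms \eqref{CH7} of the imaginary powers. The negative half of your sketch is essentially sound (the paper implements the ``pull-back'' via $C=I-T=2B(I+B)^{-1}$, \cite[Prop.~4.1]{LM} and the identity $B^{is}=2^{-is}C^{is}(I+B)^{is}$). But the polynomial-boundedness half contains a genuine gap. You propose to use the $\H^\infty(\Sigma_\phi)$ calculus of $B$ for $\phi$ close to $\pi$, on the grounds that ``$f$ maps $\Sigma_\phi$ into $\D$ for $\phi$ near $\pi$.'' That geometric claim is false: for the Cayley map $f(z)=\frac{1-z}{1+z}$ one has $f^{-1}(\D)=\Sigma_{\pi/2}$ exactly, so no sector wider than the half-plane is carried into $\D$. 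Worse, no choice of $f$ can rescue the wide-angle strategy: your two requirements are $\Sigma_\phi\subset f^{-1}(\D)$ (for polynomial boundedness) and $f^{-1}(B_\gamma)\subset\Sigma_{\psi(\gamma)}$ with $\psi(\gamma)<\theta$ for every $\gamma<\frac{\pi}{2}$ (for the contradiction). Since $\bigcup_{\gamma<\pi/2}B_\gamma=\D$, these force $\Sigma_\phi\subset f^{-1}(\D)=\bigcup_\gamma f^{-1}(B_\gamma)\subset\Sigma_{\sup_\gamma\psi(\gamma)}\subset\overline{\Sigma_\theta}$, i.e.\ $\phi\leq\theta$. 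Combined with the fact that the calculus of $B$ is only available for $\phi\geq\theta$, you are pinned to the endpoint $\phi=\theta$, with $f^{-1}(\D)=\Sigma_\theta$; concretely one must take $\theta=\frac{\pi}{2}$, $X=X_{\pi/2}^+$, and $f$ the Cayley map. The endpoint case $\phi=\theta$ of Proposition \ref{Ka} is precisely the nontrivial new ingredient of Section 2 (Proposition \ref{Delyon} plus Lemma \ref{HX}); your proposal treats it as incidental and leans instead on the easy wide-angle case, which cannot close the argument.

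Two smaller points. Your first candidate $T=I-cB$ is a dead end for polynomial boundedness: $P(1-cz)$ is a polynomial, hence unbounded on every sector, so the sectorial calculus of $B$ yields no estimate of the form $\|P(T)\|\leq K\sup_\D|P|$ by this route. And once $\theta=\frac{\pi}{2}$ and $f(z)=\frac{1-z}{1+z}$ are fixed, the remaining steps you flag as ``to be checked'' do go through as in the paper: $\sup_{\Sigma_{\pi/2}}|P\circ f|=\sup_\D|P|$ gives polynomial boundedness on the nose, and the Ritt resolvent bound follows from the identity $(\lambda-1)(\lambda-T)^{-1}=z(z-B)^{-1}(I+B)$ with $\lambda=f(z)$, using that $B$ has sectoriality angle $0$ and is bounded.
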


\begin{proof}  
We take for $X$ the Banach space $X_{\frac{\pi}{2}}^+$ considered above and we let 
$B\colon X\to X$ be the operator considered in Proposition \ref{Ka}.
Then we let 
$$
T=(I-B)(I+B)^{-1}.
$$
We note that $z\mapsto {1-z\over 1+z}$ maps $\Sigma_{\pi\over 2}$ onto $\D$ and $[0,1]$ into itself. 
Thus 
$$
\sigma(T)\subset[0,1].
$$
To show that $T$ is a Ritt operator, we consider 
$\lambda\in\C$ with $|\lambda |>1$. One can write $\lambda={1-z\over 1+z}$ with 
$z\notin \overline{\Sigma_{\pi\over 2}}$. 
It is easy to check that 
$$
(\lambda-1)(\lambda-T)^{-1}=z(z-B)^{-1}(I+B).
$$
Since the sectorial angle of $B$ is $0$, the set 
$\bigl\{z(z-B)^{-1}\, :\, z\notin \overline{\Sigma_{\pi\over 2}}\bigr\}$ 
is bounded. 
Since $B$ is bounded, we deduce that the set defined in 
(\ref{Ritt}) is bounded.

The fact that $B$ has a bounded $\H^\infty(\Sigma_{\pi\over 2})$ functional calculus on $X$ 
implies that $T$ is polynomially bounded. Indeed if $P$ is a polynomial, 
then $P(T)=F(B)$ for the rational function $F$ defined by $F(z)=P\bigl({1-z\over 1+z}\bigr)$. Hence
for some constant $K$, we have
$$
\|P(T)\|= \|f(B)\|\leq K \sup \bigl\{|F(z)|: z\in \Sigma_{\pi\over 2}\bigr\},
$$
and moreover,
$$
\sup \bigl\{|F(z)|: z\in \Sigma_{\pi\over 2}\bigr\} = \sup \bigl\{|P({w})|: w\in \D\bigr\}.
$$

Now assume that $T$ has a bounded $\H^\infty(B_\gamma)$ 
functional calculus for some
$\gamma<{\pi\over 2}.$ Consider the auxiliary operator
$$
C= I-T = 2B(I+B)^{-1}.
$$
By \cite[Prop. 4.1]{LM}, $C$ is a sectorial operator 
which admits a bounded $\H^\infty(\Sigma_\theta)$ for some $\theta\in (0,\frac{\pi}{2})$. Thus
there exists a constant $K>0$ such that  
$$
\|C^{is}\|\leq K e^{\theta |s|},\qquad s\in\R.
$$
Further $\sigma(I+B)\subset[1,2]$. Thus $I+B$ is bounded and invertible and hence
it admits a bounded $\H^\infty$ functional calculus of any type. Thus for any $\theta'>0$. 
there exists 
$K'>0$ such that  
$$
\|(I+B)^{is}\|\leq K' e^{\theta'|s|}.
$$
Since $B$ and $C$ commute, we have
$$
B^{is} = 2^{-is} C^{is}(I+B)^{is},
$$
hence
$$
\|B^{is}\|\leq KK^{'} e^{{(\theta+\theta')}|s|}
$$
for any $s\in\R$. Applying this with $\theta'$ small enough so that $\theta+\theta'<\frac{\pi}{2}$,
this contradicts (\ref{CH7}) on $X_{\frac{\pi}{2}}^{+}$.
\end{proof}

\begin{remark}\label{RR}{\rm 
A Ritt operator $T$ on Banach space $X$ is called $R$-Ritt if the bounded set in (\ref{Ritt}) is actually
$R$-bounded. That notion was introduced in \cite{Bl1}, in relation
with the study of discrete maximal regularity, see also \cite{Bl2, KP, LM, P}. Background and references
on $R$-boundedness can also be found in the latter references.

The existence of Ritt operators which are not $R$-Ritt goes back to Portal \cite{P}.
According to \cite[Prop. 7.6]{LM}, a polynomially bounded $R$-Ritt operator has a bounded
$\H^\infty(B_\gamma)$ functional calculus for some $\gamma<\frac{\pi}{2}$. Thus
the operator $T$ constructed in Theorem \ref{main} is a Ritt operator which is not $R$-Ritt.
This example is of a different nature than the ones from \cite{P}.}
\end{remark}

\section{Unconditional Ritt operators}

We now investigate the links  between the unconditional Ritt condition and the $\H^\infty$ functional calculus. 
It is observed in \cite{KP} that the 
unconditional Ritt condition (\ref{urc}) is equivalent to the existence of a constant $K>0$ such that
\begin{equation}\label{urc1}
\sum_{k\geq 1} \bigl\vert\bigl\langle
\bigl(T^k -T^{k-1})x,y\bigr\rangle\bigr\vert\,\leq \,K\|x\| \|y\|,
\qquad x\in X, y\in X^*.
\end{equation}
Moreover it is stronger than the Ritt condition.  
We will now show that the unconditional Ritt condition is 
weaker than the existence of a bounded $\H^\infty(B_\gamma)$ functional 
calculus for some $\gamma<\frac{\pi}{2}$.

\begin{Lem}\label{fcur} If $T$ admits a 
bounded $\H^\infty(B_\gamma)$ functional calculus for some 
$\gamma<\frac{\pi}{2}$, then
$T$ satisfies the unconditional Ritt condition.
\end{Lem}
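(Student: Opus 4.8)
The plan is to deduce the unconditional Ritt condition by applying the functional calculus estimate \eqref{bfc} to a single polynomial that encodes the whole sum. Given a finite sequence $(a_k)_{k\geq 1}$ of complex numbers, I would set
$$
Q(z)=\sum_{k\geq 1}a_kz^{k-1},\qquad P(z)=(z-1)Q(z)=\sum_{k\geq 1}a_k\bigl(z^k-z^{k-1}\bigr).
$$
Then $P$ is a polynomial and $P(T)=\sum_{k\geq 1}a_k(T^k-T^{k-1})$, so by \eqref{bfc} it is enough to prove the scalar inequality
$\sup\bigl\{|P(z)|:z\in B_\gamma\bigr\}\leq C_\gamma\sup_k|a_k|$ for some constant $C_\gamma$ depending only on $\gamma$.

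To get this I would split $|P(z)|=|1-z|\,|Q(z)|$ and estimate the two factors separately on $B_\gamma\subset\Ddb$. The factor involving $Q$ is controlled by the elementary bound
$$
|Q(z)|\leq\sum_{k\geq 1}|a_k|\,|z|^{k-1}\leq\Bigl(\sup_{k\geq 1}|a_k|\Bigr)\sum_{j\geq 0}|z|^{j}=\frac{\sup_k|a_k|}{1-|z|},
$$
valid since $|z|<1$ on $B_\gamma$. Hence $|P(z)|\leq\bigl(|1-z|/(1-|z|)\bigr)\sup_k|a_k|$, and the whole statement reduces to the purely geometric fact that the ratio $|1-z|/(1-|z|)$ is bounded on the Stolz domain $B_\gamma$ — that is, that $B_\gamma$ approaches its vertex $1$ non-tangentially.

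I expect this geometric estimate to be the only step requiring an actual argument, but it is short. Near $1$ the domain $B_\gamma$ lies in the cone with vertex $1$ opening toward $0$ with half-angle $\gamma<\tfrac{\pi}{2}$ (the two tangent lines from $1$ to the circle $|w|=\sin\gamma$ meet at $1$ with half-angle exactly $\gamma$). Writing such a point as $z=1-re^{i\alpha}$ with $|\alpha|\leq\gamma$ and $r=|1-z|$, one computes $1-|z|^2=r(2\cos\alpha-r)\geq r\cos\gamma$ as soon as $r\leq\cos\gamma$, whence $1-|z|\geq\tfrac12(1-|z|^2)\geq\tfrac{\cos\gamma}{2}\,|1-z|$. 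On the remaining part of $\overline{B_\gamma}$, which stays at a positive distance from the unit circle, $1-|z|$ is bounded below while $|1-z|\leq 2$; combining the two regimes produces a constant $C_\gamma$ with $|1-z|\leq C_\gamma(1-|z|)$ throughout $B_\gamma$ (alternatively one may simply quote this from the standard theory of non-tangential/Stolz regions). Putting the pieces together gives $\sup_{z\in B_\gamma}|P(z)|\leq C_\gamma\sup_k|a_k|$, and \eqref{bfc} then yields
$\bigl\Vert\sum_{k\geq 1}a_k(T^k-T^{k-1})\bigr\Vert=\Vert P(T)\Vert\leq K C_\gamma\sup_k|a_k|$,
which is precisely the unconditional Ritt condition \eqref{urc}.
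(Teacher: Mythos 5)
Your proposal is correct and follows essentially the same route as the paper: apply the functional calculus bound to $P(z)=\sum_k a_k(z^k-z^{k-1})$, sum the geometric series to reduce everything to the boundedness of $|1-z|/(1-|z|)$ on $B_\gamma$. The only difference is that you spell out the non-tangentiality estimate for the Stolz domain, which the paper simply asserts; your computation of it is correct.
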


\begin{proof} 
Assume that $T$ admits a bounded $\H^\infty(B_\gamma)$ functional calculus for some 
$\gamma<\frac{\pi}{2}$. Consider a finite sequence $(a_k)_{k\geq 1}$.
Since 
$$
\sum_{k\geq 1} a_k\bigl(T^k -T^{k-1})= P(T)
$$ 
for the polynomial $P$ defined by 
$P(z)=\sum_{k\geq 1} a_k\bigl(z^k -z^{k-1})$,  (\ref{bfc}) implies that 
$$
\Bigl\Vert\sum_{k\geq 1} a_k\bigl(T^k -T^{k-1})\Bigr\Vert\,
\leq K\sup\bigl\{\vert P(z)\vert\, :\, z\in B_\gamma\bigr\}.
$$
Now we have 
$$
|P(z)| \leq \sup_{ k\geq 1} |a_k| \ \sum_{k\geq 1} |z^k -z^{k-1}|=\sup_{ k\geq 1} |a_k|  
\biggl(\frac{|z-1|}{1-|z|}\biggr).
$$ 
Since $z\mapsto \frac{|z-1|}{1-|z|}$ is bounded on 
$B_\gamma$, this implies the unconditional Ritt condition (\ref{urc}).
\end{proof}

We now show a partial converse. See Remark \ref{RR} for the notion of $R$-Ritt operator. We will
use the companion notion of $R$-sectorial operator. We recall that a sectorial operator
$A$ on Banach space is called $R$-sectorial if 
there exists an angle $\omega$ such that $\sigma(A)\subset\overline{\Sigma_\omega}$ 
and for any $\nu\in(\omega,\pi)$ the set (\ref{sectorial}) is $R$-bounded. 
In accordance with terminology in Section 2, the smallest $\omega\in[0,\pi)$ 
with this property will be called the $R$-sectorialy angle 
of $A$. We refer the reader to \cite{Bl1,Bl2,KW,LM} and the references therein for information
on $R$-sectoriality.

\begin{Thm}\label{ThmUnc} Let  $T$ be an $R$-Ritt operator which satisfies the unconditional Ritt condition, then it
admits a bounded
$\H^\infty(B_\gamma)$ functional calculus for some $\gamma<\frac{\pi}{2}$.
\end{Thm}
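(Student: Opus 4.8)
The plan is to transfer the problem, via the Cayley-type transform $T\mapsto C=I-T$, from the Ritt world to the sectorial world, and then to apply the standard machinery relating $R$-sectoriality plus a weak boundedness condition to the existence of a bounded $\H^\infty$ functional calculus. The starting point is the dictionary established in \cite{LM}: $T$ is $R$-Ritt if and only if $C=I-T$ is an $R$-sectorial operator of $R$-sectoriality angle strictly less than $\frac{\pi}{2}$, and a bounded $\H^\infty(B_\gamma)$ functional calculus for $T$ with some $\gamma<\frac{\pi}{2}$ is equivalent to a bounded $\H^\infty(\Sigma_\theta)$ functional calculus for $C$ with some $\theta<\frac{\pi}{2}$ (using that $z\mapsto 1-\frac{1-w}{1+w}$ conformally identifies the relevant domains). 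So the goal becomes: an $R$-sectorial operator $C$ of $R$-angle $<\frac{\pi}{2}$ which satisfies a suitable quantitative version of the unconditional Ritt condition has a bounded $\H^\infty(\Sigma_\theta)$ functional calculus for some $\theta<\frac{\pi}{2}$.

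Next I would reinterpret the unconditional Ritt condition \eqref{urc1} as a Littlewood--Paley / square-function type estimate for $C$. The key identity is that $T^k-T^{k-1}=-(I-T)T^{k-1}=-C\,T^{k-1}$, so the hypothesis says $\sum_{k\geq 1}\vert\langle C\,T^{k-1}x,y\rangle\vert\leq K\|x\|\,\|y\|$. One then passes from the discrete family $(C\,T^{k-1})_k$ to the continuous family $(CsR(s,-C)^{2})_{s>0}$ or to $(tCe^{-tC})_{t>0}$ by a subordination/summation argument (replacing the power $T^{k-1}$ by the analytic semigroup generated by $-C$, legitimate because $C$ is sectorial of angle $<\frac\pi2$ and bounded away from $0$ on the relevant sector). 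This yields a ``weak'' square function bound of the form $\int_0^\infty\vert\langle\psi(tC)x,y\rangle\vert\,\frac{dt}{t}\leq K\|x\|\,\|y\|$ for a suitable non-degenerate $\psi\in\H_0^\infty(\Sigma_\nu)$.

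The decisive step is then to invoke the $R$-sectoriality: by the theory of $\H^\infty$ functional calculus for $R$-sectorial operators (the circle of results around Kalton--Weis, see \cite{KW, LM}), once $C$ is $R$-sectorial with $R$-angle $\omega<\frac{\pi}{2}$, a bounded $\H^\infty(\Sigma_\theta)$ functional calculus for some $\theta\in(\omega,\frac{\pi}{2})$ is implied by any non-trivial square function estimate together with its dual estimate on $X^*$. The weak form \eqref{urc1} is symmetric in $x$ and $y$, so it provides both the estimate on $X$ and (reading $y\in X^*$) the corresponding estimate on $X^*$ for the adjoint $C^*$, which is also $R$-sectorial with the same angle. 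Combining these via the Kalton--Weis mechanism produces the bounded $\H^\infty(\Sigma_\theta)$ functional calculus for $C$, hence for $T$ a bounded $\H^\infty(B_\gamma)$ functional calculus with $\gamma<\frac{\pi}{2}$.

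I expect the main obstacle to be the passage from the \emph{discrete} weak estimate $\sum_k\vert\langle C T^{k-1}x,y\rangle\vert<\infty$ to the \emph{continuous} weak square function estimate needed to feed the functional-calculus machinery: one must choose the auxiliary function $\psi$ carefully, control the comparison between $\sum_k C T^{k-1}$ and $\int_0^\infty\psi(tC)\,\frac{dt}{t}$ uniformly, and make sure the resulting $\psi$ is genuinely non-degenerate on a sector of half-angle slightly below $\frac{\pi}{2}$ (so that it ``sees'' all of $\sigma(C)$). The $R$-Ritt hypothesis is what guarantees this is possible with an angle strictly less than $\frac{\pi}{2}$, and it is exactly the point where $R$-boundedness, as opposed to mere boundedness, is used. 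The rest — the Cayley transform bookkeeping and the invocation of \cite[Prop. 4.1]{LM} and the Kalton--Weis theorem — is routine.
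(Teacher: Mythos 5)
Your proposal is correct and follows essentially the same route as the paper: pass to $C=I-T$, use the $R$-Ritt hypothesis to get $R$-sectoriality of angle $<\frac{\pi}{2}$, convert the unconditional Ritt condition into the weak ($L_1$-type) square function estimate $\int_0^\infty\vert\langle Ce^{-tC}x,y\rangle\vert\,\frac{dt}{t}\leq K\Vert x\Vert\Vert y\Vert$ by expanding $e^{-tC}=e^{-t}e^{tT}$ as a series in $T$, and then invoke the Cowling--Doust--McIntosh--Yagi / Kalton--Weis machinery to obtain a bounded $\H^\infty(\Sigma_\theta)$ calculus with $\theta<\frac{\pi}{2}$, hence the $\H^\infty(B_\gamma)$ calculus for $T$ via \cite[Prop.~4.1]{LM}. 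The "main obstacle" you flag (discrete-to-continuous subordination) is handled in the paper exactly as you sketch it.
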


\begin{proof} We consider the operator 
$$
C=I-T.
$$
According to \cite[Thm. 1.1]{Bl1} and its proof, the assumption that $T$ is $R$-Ritt 
implies that $C$ is $R$-sectorial, with an $R$-sectoriality angle $<\frac{\pi}{2}$.
On the other hand the unconditional Ritt condition (\ref{urc}) for $T$ implies the so-called 
$L_1$-condition for $C$ :
$$
\int_0^\infty \bigl\vert\bigl\langle
Ce^{-tC} x, y\bigr\rangle\bigr\vert\,
\frac{dt}{t}\leq K \|x\|\|y\|,\qquad  x\in X, y\in X^*.
$$
Indeed for any $t>0$,
$$
Ce^{-tC}=(I-T)e^{-t}e^{tT}=\sum_{n\geq 0} (I-T) e^{-t} \,\frac{t^nT^n}{n!}.
$$
Thus for any $x\in X$ and $y\in X^*$, we have
$$
\bigl\langle
Ce^{-tC}x,y\bigr\rangle 
= \sum_{n\geq 0} e^{-t}\,\frac{t^n}{n!}\,\bigl\langle (I-T)T^n x,y\bigr\rangle.
$$
This implies, using (\ref{urc1}), that 
\begin{align*}
\int_0^\infty  \bigl\vert\bigl\langle
Ce^{-tC} x, y\bigr\rangle\bigr\vert\,
\frac{dt}{t} & \leq \sum_{n\geq 0}\frac{1}{n!} 
\int_0^\infty \bigl\vert\bigl\langle (I-T)T^n x,y
\bigr\vert\bigr\rangle\, e^{-t} t^{n-1}\, dt \\
& =\sum_{n\geq 0} \bigl\vert\bigl\langle (I-T)T^n x,y\bigr\rangle\bigr\vert   \\
& \leq K\|x\|\|y\|.
\end{align*}

Now by results of \cite[Section 4]{CDMcIY}, the $L_1$-condition implies 
that $C$ admits a bounded $\H^\infty(\Sigma_\theta)$ functional calculus for all 
$\theta>\frac{\pi}{2}$. Since $C$ is $R$-sectorial with an $R$-sectoriality angle 
$<\frac{\pi}{2}$, it follows from 
\cite[Prop. 5.1]{KW} that $C$ actually admits a bounded $\H^\infty(\Sigma_\theta)$ functional calculus for 
some $\theta<\frac{\pi}{2}$. By \cite[Prop. 4.1]{LM}, this is 
equivalent to the fact that $T$ has a  bounded $\H^\infty(B_\gamma)$ functional 
calculus for some $\gamma<\frac{\pi}{2}$.
\end{proof}

It is shown in \cite[Thm. 4.7]{KP} that when $X$ is a Hilbert space, the unconditional
Ritt condition is equivalent to certain square function estimates. We can now extend
that result to $L^p$-spaces. In the next statement, we let $p'=p/(p-1)$ denote
the conjugate number of $p$.

\begin{Cor}\label{Lp}
Let $\Omega$ be a measure space, let $1<p<\infty$ and let $T\colon L^p(\Omega)\to L^p(\Omega)$
be a power bounded operator. The following assertions are equivalent.
\begin{itemize}
\item [(i)]
$T$ is $R$-Ritt and satisfies the unconditional Ritt condition.
\item [(ii)] There exists a constant $C>0$ such that
\begin{equation}\label{SF1}
\Bigl\Vert\Bigl(\sum_{k=1}^{\infty} k\,\bigl\vert T^{k}(x) -T^{k-1}
(x)\bigr\vert^2\Bigr)^{\frac{1}{2}}
\Bigr\Vert_{p}\,\leq C\Vert x \Vert
\end{equation}
for any $x\in L^p(\Omega)$ and
\begin{equation}\label{SF2}
\Bigl\Vert\Bigl(\sum_{k=1}^{\infty} k\,\bigl\vert T^{*k}(y) -T^{*(k-1)}(y)\bigr\vert^2\Bigr)^{\frac{1}{2}}
\Bigr\Vert_{p'}\,\leq C\Vert y \Vert
\end{equation}
for any $y\in L^{p'}(\Omega)$.
\end{itemize}
\end{Cor}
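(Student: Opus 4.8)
The plan is to derive the corollary by combining Theorem \ref{ThmUnc}, Lemma \ref{fcur}, and the square function description of the $\H^\infty$ functional calculus of Ritt operators on $L^p$-spaces. I first record the translation between (ii) and functional calculus: since $T^{k}(x)-T^{k-1}(x)=-T^{k-1}(I-T)x$ for every $k\ge1$, the estimate (\ref{SF1}) is exactly the $L^p$-square function estimate of order $1$ for $T$, and (\ref{SF2}) is the corresponding one for the operator $T^*$ on $L^{p'}$ (which is again Ritt, by duality; the standing power-boundedness of $T$ is what makes these square functions the relevant objects). The key external input is that, for an $R$-Ritt operator on $L^p(\Omega)$, the conjunction of (\ref{SF1}) and (\ref{SF2}) is equivalent to the existence of a bounded $\H^\infty(B_\gamma)$ functional calculus for some $\gamma<\frac{\pi}{2}$; this is in \cite{LMX}. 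Alternatively one may phrase everything through $C=I-T$: by \cite[Prop. 4.1]{LM}, $C$ is sectorial with sectoriality angle $<\frac{\pi}{2}$, the square functions of $T$ and $T^*$ are equivalent to the sectorial square functions of $C$ and $C^*$ via the subordination identity $Ce^{-tC}=e^{-t}\sum_{n\ge0}\frac{t^n}{n!}(T^n-T^{n+1})$, and the square function characterization of the $\H^\infty$ calculus on $L^p$ from \cite{CDMcIY} applies, the resulting angle staying $<\frac{\pi}{2}$ precisely because $C$ is sectorial of angle $<\frac{\pi}{2}$.

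With this in hand the equivalence is quick. Assuming (i), Theorem \ref{ThmUnc} gives $T$ a bounded $\H^\infty(B_\gamma)$ functional calculus for some $\gamma<\frac{\pi}{2}$, and since $T$ is also $R$-Ritt, \cite{LMX} yields (\ref{SF1}) and (\ref{SF2}), that is (ii). Conversely, assuming (ii), \cite{LMX} gives that $T$ is $R$-Ritt and admits a bounded $\H^\infty(B_\gamma)$ functional calculus for some $\gamma<\frac{\pi}{2}$; Lemma \ref{fcur} then delivers the unconditional Ritt condition, so (i) holds. If the form of the $L^p$-characterization one invokes does not already include the $R$-Ritt property on the functional-calculus side, it is recovered directly: $C=I-T$ has a bounded $\H^\infty(\Sigma_\theta)$ calculus for some $\theta<\frac{\pi}{2}$, and because $L^p(\Omega)$ has property $(\alpha)$ this forces $C$ to be $R$-sectorial of $R$-sectoriality angle $<\frac{\pi}{2}$ by \cite{KW}, whence $T$ is $R$-Ritt by the converse part of \cite[Thm. 1.1]{Bl1}.

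The substance is concentrated in Theorem \ref{ThmUnc} and in the $L^p$ square function machinery of \cite{LMX}; the rest is bookkeeping. The one step that genuinely uses that the underlying space is $L^p$ rather than a Hilbert space --- and which is therefore the main obstacle relative to \cite[Thm. 4.7]{KP} --- is the passage from the two square function estimates back to a bounded $\H^\infty(B_\gamma)$ functional calculus with angle $<\frac{\pi}{2}$ together with $R$-Ritt: on a Hilbert space this is automatic (McIntosh's theorem, $R$-boundedness being trivial), whereas on $L^p$ it rests on using the square functions of both $T$ and $T^*$, on the fact that $I-T$ is automatically sectorial of angle $<\frac{\pi}{2}$ (\cite[Prop. 4.1]{LM}), and on the Banach-space geometry of $L^p$.
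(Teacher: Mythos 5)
Your proof is correct and takes essentially the same route as the paper: both directions rest on the square-function characterization of the bounded $\H^\infty(B_\gamma)$ functional calculus for Ritt operators on $L^p$ (the paper cites \cite[Thm.~1.1]{LM} for the equivalence and \cite[Thm.~5.3]{LM} for recovering the $R$-Ritt property from (ii), where you cite \cite{LMX} and a property-$(\alpha)$/Kalton--Weis fallback, but the content is the same), combined with Theorem \ref{ThmUnc} for (i)$\Rightarrow$(ii) and Lemma \ref{fcur} for (ii)$\Rightarrow$(i).
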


\begin{proof}
If the square function estimates in (ii) hold true,
then $T$ is an $R$-Ritt operator by \cite[Thm. 5.3]{LM}.
Further $T$ has a bounded $\H^\infty(B_\gamma)$ functional calculus
for some $\gamma < \frac{\pi}{2}$, by \cite[Thm. 1.1]{LM}. Hence
Lemma \ref{fcur} ensures that $T$ satisfies the unconditional Ritt condition.
The converse assertion that (i) implies (ii) is obtained by combining Theorem \ref{ThmUnc}
and \cite[Thm. 1.1]{LM}.
\end{proof}

It is clear from \cite{LM} that Corollary \ref{Lp} holds as well on reflexive
Banach lattices with finite cotype. Further generalizations hold true on more
Banach spaces, using the abstract
square functions introduced and discussed in \cite{LM}, to which we refer for
more information. Combining
the results from that paper with Theorem \ref{ThmUnc}, one obtains that when $X$
has finite cotype and $T\colon X\to X$ is an $R$-Ritt operator, then
$T$ satisfies the unconditional Ritt condition if and only if $T$
and $T^*$ admit square function estimates.


\begin{thebibliography}{99}


\bibitem{A} C. Arhancet, {\it Unconditionality, Fourier multipliers and Schur multipliers},
Colloquium Mathematicum 127, 17-37 (2012). 
\bibitem{ALM} C. Arhancet, and C. Le Merdy, {\it Dilation of Ritt operators on $L^p$-spaces}, Preprint 2011,
arXiv:1106.1513.
\bibitem{Bl1} S. Blunck, {\it Maximal regularity of discrete and continuous time evolution equations},
Studia Math. 146  (2001),  no. 2, 157-176.
\bibitem{Bl2} S. Blunck, {\it Analyticity and discrete maximal regularity on $L_p$-spaces},
J. Funct. Anal.  183  (2001),  211-230.
\bibitem{CD} M. Crouzeix, B. Delyon, {\it Some estimates for analytic functions of band or sectorial operators},
Arch. Math. 81 (2003), 559-566.
\bibitem{CDMcIY} M. Cowling, I. Doust, A. McIntosh, and 
A. Yagi, {\it Banach space operators with a bounded $H^\infty$ functional calculus},
J. Aust. Math. Soc.,  Ser. A 60 (1996),  51-89.
\bibitem{H} M. Haase, {\it The functional calculus for sectorial operators}, 
Operator Theory: Advances and Applications, 169, Birkh\"auser Verlag, Basel, 2006. xiv+392 pp.
\bibitem{K} N. J. Kalton, {\it A remark on sectorial operators with an $H^\infty$-calculus},
pp. 91-99 in ``Trends in Banach spaces and operator theory", Contemp. Math. 321, Amer. Math. Soc., Providence, RI, 2003.
\bibitem{KP} N. J. Kalton, and P. Portal, {\it Remarks on $\ell_1$ and $\ell_\infty$-maximal
regularity for power bounded operators}, J. Aust. Math. Soc. 84 (2008), 345-365.
\bibitem{KW} N. J. Kalton, and L. Weis, {\it The $H^\infty$ functional calculus and sums of closed operators},
Math. Ann.  321 (2001),  319-345.
\bibitem{LM} C. Le Merdy, {\it $H^\infty$ functional calculus and square function estimates
for Ritt operators}, Preprint 2011, arXiv:1202.0768.
\bibitem{LMX} C. Le Merdy, and Q. Xu, {\it Maximal theorems and square functions for analytic operators on
$L^p$-spaces}, J. London Math. Soc., to appear (arXiv:1011.1360).
\bibitem{McI} A. McIntosh,  {\it Operators which have a bounded $H^\infty$ functional calculus},
Proc. CMA Canberra 14 (1986), 210-231.
\bibitem{P} P. Portal, {\it Discrete time analytic semigroups and the geometry of Banach spaces},
Semigroup Forum 67 (2003), 125-144.
\end{thebibliography}
\end{document}